\def\@seccntformat#1{\csname the#1\endcsname.\ } 
\newtheorem{lemma}{Lemma}
\newtheorem{proposition}{Proposition}
\newtheorem{corollary}{Corollary}
\theoremstyle{remark}
\def\FF{\mathbb{F}}
\def\GG{\mathbb{G}}
\def\Cal{\mathrm{Cal}}
\title{On the coset graph construction\\
       of distance-regular graphs%
\thanks{The work of M. J. Shi is supported by the National Natural Science Foundation of China (12071001) and the Excellent Youth Foundation of Natural Science Foundation of Anhui Province (1808085J20); the work of D. S. Krotov is supported
within the framework of the state contract of the Sobolev Institute of Mathematics
(FWNF-2022-0017).
}
}
\author{Minjia Shi%
\thanks{Key Laboratory of Intelligent Computing and Signal Processing, Ministry of Education, School of Mathematical Sciences, Anhui University, Hefei, 230601, China, {\tt smjwcl.good@163.com}}%
,
Denis S. Krotov%
\thanks{Sobolev Institute of Mathematics, Novosibirsk 630090, Russia, {\tt krotov@math.nsc.ru}}%
,
Patrick Sol\'e%
\thanks{I2M (CNRS, Centrale Marseille, University of Aix-Marseille), Marseilles, France, {\tt sole@enst.fr}}%
}
\date{}
\begin{document}
\maketitle

\begin{abstract}
 We show that no more new distance-regular graphs in the tables of the book of (Brouwer, Cohen, Neumaier, 1989) can be produced by using the coset graph of additive completely regular codes over finite fields.
\end{abstract}

{\bf Keywords:} completely regular codes, distance-regular graphs\\

{\bf AMS Classification (MSC 2010):} Primary  94B05 Secondary 05E30
\section{Introduction}

Since the times of Delsarte \cite{Delsarte:1973}, an important tool to construct distance-regular graphs has been the coset graph of a completely regular code. Many examples can be found
in Chapter~11 of \cite{Brouwer}, involving notably the Golay codes and the Kasami codes (for a new description of these codes see e.g.~\cite{ShiKroSol:Kasami}). More recently,
a new distance-regular graph related to the dodecacode, an important additive $\FF_4$-code of length~$12$ was constructed in~\cite{SKS:drg}.
This solved both a thirty year old problem in graph theory
\cite[Ch.\,14]{Brouwer}, and a forty-five year old problem in coding theory \cite{BZZ:1974:UPC}.

In the present note, we find out which graphs with the parameters in the tables of \cite[Ch.\,14]{Brouwer} can be realized as coset graphs of completely regular additive codes over a finite field.
A trivial necessary condition is that the number of vertices should be a power of a prime. More elaborate necessary conditions include a refinement of the Lloyd theorem for equitable partitions (Lemma~\ref{l:wd}),
and a divisibility condition bearing on the entries of the intersection array (Corollary~\ref{c:fact}). When all these techniques are exhausted, we can eliminate the remaining parameters by showing that the needed
code does not exist, either by a direct combinatorial argument, or by long electronic computations using the dedicated software~\cite{Kurz:LINCODE} or~\cite{QextNewEdition}. We conclude that no more additive
codes are left to be found satisfying the said condition.

The material is arranged as follows. The next section lists the parameters we are considering. Section~\ref{s:def} collects basic facts and notations. Section~\ref{s:wd} gives the two theoretical tools mentioned above.
Sections~\ref{s:42} to~\ref{s:140} study the codes remaining after use of these two tools. Section~\ref{s:concl} concludes the article.

\section{Parameters}\label{s:par}
The only ``unsolved'' parameters of distance-regular graphs with prime-power order
in the [BCN] table \cite[Ch.\,14]{Brouwer} are the following (the most of the cited updates are mentioned in~\cite[Ch.\,``Tables'']{vDKT:DR}):
\begin{itemize}
 \item $v=2187=3^7$, $\{140,126,20,1;1,10,126,140\}$ (Section~\ref{s:140});

 \item $v=243=3^5$, $\{22,16,5;\underline{1,2,20}\}$; a graph
 does not exist \cite{SumVor:2016};
 \item $v=256=2^8$, $\{25,24,3;\underline{1,3},20\}$ (Corollary~\ref{c:fact});

 \item $v=729=3^6$, $\{26,24,19;\underline{1,3},8\}$ (Corollary~\ref{c:fact});

 \item $v=1024=2^{10}$, $\{31,30,17;1,2,15\}$; a graph exists
 (Kasami graph \cite[Thm. 11.2.1 $(13)$]{Brouwer}, $q=j=2$).

 \item $v=625=5^4$, $\{32,28,9;\underline{1,2,28}\}$ (Corollary~\ref{c:fact});

 \item $v=1024=2^{10}$, $\{33,30,15;1,2,15\}$; a graph exists \cite{SKS:drg};

 \item $v=625=5^4$, $\{ 36,28,4;1,2,24 \}$ (Lemma~\ref{l:wd});

 \item $v=343=7^3$, $\{ 42,30,12;1,6,28 \}$ (Section~\ref{s:42});

 \item $v=243=3^5$, $\{ 44,36,5;\underline{1,9},40 \}$ (Corollary~\ref{c:fact});

 \item $v=343=7^3$, $\{ 48,35,9;\underline{1,7},40 \}$ (Corollary~\ref{c:fact});

 \item $v=625=5^4$, $\{ 52,42,16;1,6,28 \}$ (Section~\ref{s:52});

 \item $v=729=3^6$, $\{56,42,20;1,6,28\}$ (Section~\ref{s:56});

 \item $v=512=2^9$, $\{63,48,10;1,8,54 \}$ (Lemma~\ref{l:wd});

 \item $v=729=3^6$, $\{80,63,11;\underline{1,9},70 \}$ (Corollary~\ref{c:fact});

 \item $v=729=3^6$, $\{ 104,66,8;1,12,88 \}$; a graph does not exist \cite{Urlep:2012}.
\end{itemize}
In this note we discuss the possibility to construct
distance-regular graphs
with the parameters above as coset graphs of linear completely regular codes
in Hamming graph $H(n,q)$, where $q$ is prime.
(Note that the existence of a completely regular code in
 $H(n,q)$ where $q=p^s$ for prime $p$ implies  the existence of a completely regular code in $H(n(q-1)/(p-1),p)$ with the same intersection matrix, because  $H(n(q-1)/(p-1),p)$ covers $H(n,q)$, see also Lemma~\ref{l:11.1.10} below.)

\section{Definitions and basic facts}\label{s:def}

\subsection{Graphs}\label{ss:graphs}
All graphs in this note are finite, undirected, connected, without loops or multiple edges.
In a graph $\Gamma$,
the \emph{neighborhood} $\Gamma(\bar v)$ of a vertex $\bar v$ is the set of vertices adjacent to $\bar v$.
The {\em degree} of a vertex $\bar v$ is the size of $\Gamma(\bar v)$.
A graph is {\em  regular} if every vertex has the same degree.
The \emph{$i$-neighborhood} $\Gamma_i(\bar v)$ is the set of vertices at geodetic distance $i$ to $\bar v$.
The \emph{distance-$i$} graph of $\Gamma$ 
is the graph on the same vertex set where two vertices are adjacent
if and only if the  geodetic distance between them in $\Gamma$ is $i$.
A graph is {\em  distance-regular} (DR) if for every pair or vertices $\bar u$ and $\bar v$ at distance $i$ apart the quantities
$$
 b_i=| \Gamma_{i+1}(\bar u)\cap \Gamma(\bar v)|, \qquad
 c_i=| \Gamma_{i-1}(\bar u)\cap \Gamma(\bar v)|,
$$
which are referred to as the \emph{intersection numbers} of the graph,
solely depend on $i$ and not on the special choice of the pair $(\bar u,\bar v)$.
The {\em  automorphism group} of a graph is the set of permutations of the vertices that preserve adjacency.

Given a group $\GG$ and an inversion-closed set $G$ of non-identity elements of $\GG$,
the Cayley graph $\Cal(\GG,G)$ is the graph on the vertex set $\GG$ where two elements
$\bar u$ and $\bar v$ are adjacent if and only if $\bar u \bar v^{-1} \in G$.

The {\em Hamming graph} $H(n,q)$
 is a distance-regular graph on the set $\Sigma^n$ of $n$-words over the alphabet $\Sigma$ of size $q$,
 two vectors being adjacent
 if they are at Hamming distance one.
 We mostly consider the case when
 $q$ is a prime power,
 $\Sigma$ is the finite field $\FF_q$ of order $q$,
 and $\Sigma^n$ is
 an $n$-dimensional vector space over $\FF_q$.
The \emph{weight} of a vertex of $H(n,q)$
is the number of nonzero elements in the corresponding
tuple. The \emph{weight distribution} of a set $C$ of vertices of $H(n,q)$
is the sequence $W = (W_0,W_1,\ldots,W_d)$,
where $W_i$ is the number of words of weight $i$ in $C$.
Sometimes, we will write the weight distribution
in the form 
$\{i_0^{W_{i_0}},i_1^{W_{i_1}},\ldots,i_k^{W_{i_k}} \}$,
where $W_{i_0}$, \ldots, $W_{i_k}$ are the nonzero terms in $W$; for example, $(1,0,0,7,7,0,0,1) = \{ 0^1,3^7,4^7,7^1\}$.

 \subsection{Equitable partitions, completely regular codes}\label{ss:crc}
{ A partition $(P_0,\ldots,P_{r})$ of the vertex set of a graph $\Gamma$ is called
 an \emph{equitable partition}}
 (also known as regular partition, partition design, perfect coloring)
 if there are constants $S_{ij}$ such that
 $$|\Gamma(\bar v)\cap P_j|=S_{ij}\qquad \mbox{for every } \bar v\in P_i, \qquad i,j=0,\ldots,r.$$
 The numbers $S_{ij}$ are referred to as the intersection numbers,
 and the matrix
 $(S_{ij})_{i,j=0}^{r}$ as the \emph{intersection} (or quotient) \emph{matrix}.
 If the intersection matrix is tridiagonal,
 then $P_0$ is called a \emph{completely regular code} of covering radius $r$ and
 \emph{intersection array} $(S_{01},S_{12},\ldots, S_{r{-}1\, r}; S_{10},S_{21},\ldots, S_{r\, r{-}1})$.
 That is to say, a set of vertices $C$ is a completely regular code
 if the distance partition
 $(C=C^{(0)},C^{(1)},\ldots,C^{(r)}$) with respect to $C$ is equitable.
 As was proven by Neumaier \cite{Neumaier92}, for a distance-regular graph,
 this definition is equivalent to the original Delsarte definition \cite{Delsarte:1973}:
 a code $C$ is completely regular if
 the outer distance distribution $(|C\cap \Gamma_i(\bar v)|)_{i=0,1,2,...} $ of $C$ with respect to a vertex $\bar v$ depends only on the distance from $\bar v$ to $C$.

\subsection{Linear and additive codes}\label{ss:linear}
A  linear code (that is, a linear subspace of $\FF_q^n$) of dimension $k$ and
minimum distance $d$ in $H(n,q)$ is called an
$[n,k,d]_q$ code.
The duality is understood with respect to the standard inner product.

Linear codes are special cases of the additive codes, which are, by definition, the subgroups of the additive group of  $\FF_q^n$.
A {\em coset} of an additive code $C$ is any translate of $C$ by a constant vector. A {\em coset leader} is any coset element that minimizes the weight.
The {\em weight of a coset} is the weight of any of its leaders.
The {\em coset  graph} $\Gamma_C$ of an additive code $C$
is defined on the cosets of $C$,
two cosets being adjacent if they differ in a coset of weight one.
\begin{lemma}[see, e.g., \cite{Brouwer}]\label{l:cr-dr}
An additive code
with distance at least $3$ is completely regular
with intersection array $\{b_0,\ldots,b_{\rho-1};c_1,\ldots,c_{\rho} \}$ if and only if
the coset graph is distance-regular with intersection numbers $b_0,\ldots,b_{\rho-1}$, $c_1,\ldots,c_{\rho}$.
\end{lemma}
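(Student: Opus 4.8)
The plan is to realize the coset graph $\Gamma_C$ as a Cayley graph and to show that the quotient map $\pi\colon \FF_q^n\to \FF_q^n/C$ transports the distance partition of the Hamming graph $H(n,q)$ with respect to $C$ onto the distance partition of $\Gamma_C$ with respect to the base vertex $\bar 0=C$, preserving all intersection numbers. First I would observe that $\Gamma_C=\Cal(\FF_q^n/C,G)$, where $G$ is the set of weight-one cosets; this set is inversion-closed, since the negative of a weight-one vector again has weight one, so $\Gamma_C$ is vertex-transitive, and it is connected because the weight-one vectors generate $\FF_q^n$. Here the hypothesis $d\ge 3$ enters for the first time: if two weight-one vectors $u\ne u'$ lay in the same coset, then $u-u'\in C$ would be a nonzero codeword of weight at most $2$, contradicting $d\ge 3$; hence distinct weight-one vectors determine distinct weight-one cosets, none of which equals $C$ itself, so $\Gamma_C$ is a simple graph of degree exactly $b_0=n(q-1)$.

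Next I would establish the key identity that geodesic distance in $\Gamma_C$ equals coset weight. A walk of length $k$ starting at $\bar 0$ reaches a coset $\overline{u_1+\cdots+u_k}$ with each $u_j$ of weight one; since every vector of weight $w$ is a sum of $w$ weight-one vectors and, conversely, a sum of $k$ weight-one vectors has weight at most $k$, the least $k$ for which $\bar x$ is reachable equals $\min_{c\in C}\mathrm{wt}(x+c)$, which is precisely the Hamming distance $d(x,C)$. Consequently $\pi$ maps the cell $C^{(i)}$ of vectors at distance $i$ from $C$ onto the set $\Gamma_{C,i}(\bar 0)$ of vertices at distance $i$ from the base vertex, so the covering radius $r$ of $C$ equals the diameter $\rho$ of $\Gamma_C$, and $\bar v\in\Gamma_{C,i}(\bar 0)$ if and only if any representative $v$ lies in $C^{(i)}$.

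The heart of the argument is that $\pi$ is locally bijective on neighborhoods, again by $d\ge 3$: for a fixed $v$ the $n(q-1)$ Hamming-neighbors $v+u$ map to the $n(q-1)$ graph-neighbors $\bar v+\bar u$ of $\bar v$, and these images are distinct because $\bar u=\bar u'$ forces $u=u'$. Since the graph distance of $\bar v+\bar u$ from $\bar 0$ equals the Hamming distance of $v+u$ from $C$, this bijection matches neighbors of $\bar v$ lying in $\Gamma_{C,j}(\bar 0)$ with Hamming-neighbors of $v$ lying in $C^{(j)}$; hence for $v\in C^{(i)}$ one has $|\Gamma(\bar v)\cap\Gamma_{C,j}(\bar 0)|$ equal to the number of Hamming-neighbors of $v$ inside $C^{(j)}$. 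Because in any graph a neighbor of a vertex at distance $i$ from a fixed point is at distance $i-1$, $i$, or $i+1$ from it, only $j\in\{i-1,i,i+1\}$ occur on both sides, so both partitions are automatically tridiagonal once the relevant counts are constant.

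Finally I would use vertex-transitivity to pass between the two notions. By definition $C$ is completely regular exactly when its distance partition in $H(n,q)$ is equitable, that is, when the number of Hamming-neighbors of $v\in C^{(i)}$ lying in $C^{(j)}$ depends only on $i$ and $j$; by the matching of the previous step this is equivalent to the base-vertex counts $|\Gamma(\bar v)\cap\Gamma_{C,j}(\bar 0)|$ depending only on $i$ and $j$, and the two intersection arrays then coincide termwise, with $S_{i,i+1}=b_i$ and $S_{i,i-1}=c_i$. Because $\Gamma_C$ is a Cayley graph, every vertex can be translated to $\bar 0$, so the stronger requirement in the definition of a distance-regular graph—constancy of $b_i$ and $c_i$ over \emph{all} pairs at distance $i$—reduces to constancy of these base-vertex counts, which yields both implications simultaneously. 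The main obstacle, and the point where the hypothesis on the minimum distance is indispensable, is the local-bijection step: if $d\le 2$ the map $\pi$ collapses weight-one vectors, the degree of $\Gamma_C$ drops below $n(q-1)$, and the clean correspondence between the two intersection arrays breaks down.
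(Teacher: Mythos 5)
Your proof is correct and complete: the realization of the coset graph as a Cayley graph on $\FF_q^n/C$, the identity between graph distance and coset weight, the local bijection between Hamming neighborhoods and coset-graph neighborhoods forced by $d\ge 3$, and the use of vertex-transitivity to upgrade equitability of the distance partition around the base vertex to full distance-regularity are all sound, and the intersection arrays match termwise as you claim. The paper itself offers no proof of this lemma---it is stated as a known result with a citation to \cite{Brouwer}---and your argument is essentially the standard one from that reference (Chapter 11), so there is nothing substantive to contrast.
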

On the other hand,
\begin{lemma}[{see \cite[Theorem~11.1.10]{Brouwer}}]\label{l:11.1.10}
 If a distance-regular graph of diameter at least $3$
 is a Cayley graph on an elementary abelian $p$-group,
 then it is the coset graph of a
 linear completely regular $1$-code over $\FF_p$.
\end{lemma}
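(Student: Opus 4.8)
The plan is to identify the elementary abelian group $\GG$ with the vector space $\FF_p^m$ (where $|\GG|=p^m$), write $\Gamma=\Cal(\FF_p^m,G)$ with connection set $G=\Gamma(\bar 0)$, and show that $\Gamma$ is the coset graph of a linear code precisely when $G$ is closed under multiplication by $\FF_p^*$, i.e.\ when $G$ is a union of sets of the form $\{\lambda\bar g:\lambda\in\FF_p^*\}$. Indeed, if $H$ is an $m\times n$ matrix over $\FF_p$ and $C\subseteq\FF_p^n$ is the code with parity-check matrix $H$, then the syndrome map identifies $\FF_p^n/C$ with $\FF_p^m$ and sends the weight-one words $\lambda\bar e_i$ to the vectors $\lambda h_i$, where $h_i$ is the $i$-th column of $H$; hence $\Gamma_C=\Cal(\FF_p^m,\{\lambda h_i:\lambda\in\FF_p^*,\ 1\le i\le n\})$. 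Conversely, given such an $\FF_p^*$-invariant $G$, I would let $h_1,\dots,h_n$ be one representative of each line meeting $G$; then $\Gamma=\Gamma_C$, the columns are nonzero and pairwise non-proportional, so $C$ has minimum distance at least $3$, and complete regularity of $C$ follows at once from Lemma~\ref{l:cr-dr} since $\Gamma$ is distance-regular. (Over the prime field $\FF_p$ an additive code is automatically $\FF_p$-linear, so linearity needs no separate argument.) Thus everything reduces to proving that $G$ is closed under scalar multiplication.

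To attack this I would pass to the spectrum. Since $\Gamma$ is a Cayley graph on $\FF_p^m$, its eigenvalues are the character sums
$$\theta_{\bar y}=\sum_{\bar g\in G}\omega^{\langle\bar y,\bar g\rangle},\qquad \bar y\in\FF_p^m,$$
where $\omega$ is a fixed primitive $p$-th root of unity; these lie in $\mathbb{Z}[\omega]$, on which $\mathrm{Gal}(\mathbb{Q}(\omega)/\mathbb{Q})\cong\FF_p^*$ acts by $\sigma_t:\omega\mapsto\omega^t$. The key computation is that $\sigma_t(\theta_{\bar y})=\theta_{t\bar y}$ and, after the substitution $\bar g\mapsto t\bar g$, that $\theta_{t\bar y}$ is the character sum of $tG$ evaluated at $\bar y$. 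Consequently $tG=G$ for every $t\in\FF_p^*$ if and only if $\theta_{t\bar y}=\theta_{\bar y}$ for all $t$ and $\bar y$, which by injectivity of the Fourier transform is equivalent to every $\theta_{\bar y}$ being fixed by the whole Galois group, i.e.\ to $\Gamma$ having an entirely integral spectrum. So the assertion to be proved is exactly that a distance-regular graph on $p^m$ vertices of diameter at least $3$ has integral eigenvalues.

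This last step is the genuine obstacle. For $p=2$ it is vacuous, since $\FF_2^*=\{1\}$ acts trivially and $G$ is automatically a union of (one-element) lines. For odd $p$ one must rule out irrational eigenvalues. Here the hypotheses interact: an irrational eigenvalue of a distance-regular graph is a quadratic irrationality $\tfrac12(a\pm\sqrt b)$ whose conjugate is again an eigenvalue of the same multiplicity, and requiring $\sqrt b\in\mathbb{Z}[\omega]$ forces $\mathbb{Q}(\sqrt b)$ to be the unique quadratic subfield of $\mathbb{Q}(\omega)$. I would combine this with the multiplicity formulas for distance-regular graphs and the divisibility constraints coming from the $\FF_p^*$-action on the index set (the fibre $\{\bar y:\theta_{\bar y}=\theta\}$ is a union of full lines, respectively of half-lines fixed by the squares, according as $\theta$ is rational or quadratic irrational) to reach a contradiction once the diameter is at least $3$. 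That the diameter hypothesis is essential is shown by the Paley graphs on $\FF_p$: these are distance-regular of diameter $2$, are Cayley graphs on $\FF_p$, and have the irrational eigenvalues $\tfrac12(-1\pm\sqrt p)$ together with a connection set (the nonzero squares) that is manifestly not a union of lines. Excluding precisely this conference-graph behaviour is what diameter at least $3$ buys, and carrying out that exclusion in general is the crux of the proof.
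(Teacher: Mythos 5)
Your opening reduction is correct, and it is in fact how any proof of this lemma must begin; note, though, that the paper itself offers \emph{no} proof of this statement --- it is quoted verbatim from \cite[Theorem~11.1.10]{Brouwer} --- so the only question is whether your argument stands on its own. It does not. You correctly identify $\GG$ with $\FF_p^m$, correctly characterize coset graphs of linear codes as the Cayley graphs whose connection set is a union of punctured lines $\FF_p^*\bar g$ (with complete regularity then coming from Lemma~\ref{l:cr-dr}), and correctly translate that condition, via $\sigma_t(\theta_{\bar y})=\theta_{t\bar y}$ and Fourier inversion, into integrality of the spectrum. But the proof stops exactly where the content of the lemma lies: the integrality (equivalently, $tG=G$ for all $t\in\FF_p^*$) is never established. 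You yourself call it ``the genuine obstacle'' and ``the crux'' and offer only a plan of attack. A clean reduction plus an acknowledged unproved claim is a gap, not a proof.

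Worse, the plan you sketch for the missing step rests on a false fact. That an irrational eigenvalue of a distance-regular graph is a quadratic irrationality $\tfrac12(a\pm\sqrt b)$ with a conjugate of equal multiplicity is a theorem about \emph{strongly regular} graphs (diameter $2$); for diameter at least $3$ it fails, e.g.\ the heptagon has the conjugate \emph{cubic} irrationalities $2\cos(2\pi j/7)$, $j=1,2,3$, among its eigenvalues. And this example is fatal rather than peripheral: for any prime $p\ge 7$ the cycle $C_p$ is a distance-regular graph of diameter $(p-1)/2\ge 3$ and a Cayley graph on the elementary abelian group $\FF_p$, its connection set $\{\pm1\}$ is not closed under scalar multiplication, and it is not the coset graph of any linear code over $\FF_p$ (a coset graph on $p$ vertices has connection set a union of punctured lines, hence is $K_p$). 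So the statement you reduced the lemma to --- integral spectrum for every distance-regular Cayley graph on an elementary abelian group with diameter at least $3$ --- is false as stated; polygons must be excluded. This is precisely why the theorem cited by the paper needs (and in \cite{Brouwer} carries) a restriction to valency $k\ge 3$, silently suppressed in the paper's abridged statement (harmless there, since all relevant valencies are at least $22$), and why a complete proof has to invoke eigenvalue-multiplicity machinery for distance-regular graphs --- an eigenvalue of multiplicity $2$ forces a polygon, via bounds of Terwilliger--Godsil type such as $k\le\frac{(m+2)(m-1)}{2}$ --- rather than the quadratic-subfield and half-line counting you propose. Without importing that machinery, and without adding the polygon exclusion to your intermediate claim, your argument cannot be completed.
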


So, the existence of a distance-regular graph of diameter at least $3$ that is a Cayley graph on an elementary abelian $p$-group is equivalent to the existence of a linear completely regular $1$-code
in a Hamming graph over $\FF_p$ with the same intersection array.

\section{Weight distributions}\label{s:wd}
\begin{lemma}[\cite{Martin:PhD},\cite{Kro:struct}]\label{l:wd}
 If $P$ is an equitable partition of the Hamming graph $H(n,q)$
 with quotient matrix $S$,
 then $P$  is an equitable partition of the distance-$w$ graph of $H(n,q)$
 with quotient matrix $S^{(w)}:=K_w(K_1^{-1}(S))$, where
 $$K_k(x):= K_k(x;n,q):=  \sum_{j=0}^{k} (-1)^{j}(q-1)^{k-j}\binom{x}{j}\binom{n-x}{k-j} $$
 is the Krawtchouk polynomial.
 In particular, $K_w(K_1^{-1}(S))$ is integer.
 (The same is true for any distance-regular graph, with the corresponding P-po\-ly\-no\-mial.)
\end{lemma}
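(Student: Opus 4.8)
The plan is to realize the distance-$w$ adjacency matrix of $H(n,q)$ as a polynomial in the ordinary adjacency matrix, and then to transport the equitability relation through this polynomial. Write $A=A_1$ for the adjacency matrix of $H(n,q)$, write $A_w$ for the adjacency matrix of its distance-$w$ graph, and let $M$ be the $q^n\times(r{+}1)$ characteristic matrix of the partition $P$, so that $M_{\bar v,i}=1$ exactly when $\bar v\in P_i$. A direct count shows that the hypothesis ``$P$ is equitable with quotient matrix $S$'' is equivalent to the single matrix identity $AM=MS$: indeed $(AM)_{\bar v,j}=|\Gamma(\bar v)\cap P_j|$, which for $\bar v\in P_i$ equals $S_{ij}$ precisely under equitability, while $(MS)_{\bar v,j}=S_{ij}$ for $\bar v\in P_i$ by definition of $M$. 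My target is the analogous identity $A_wM=MS^{(w)}$, since reading it entrywise yields exactly $|\Gamma_w(\bar v)\cap P_j|=S^{(w)}_{ij}$ for all $\bar v\in P_i$, i.e.\ equitability in the distance-$w$ graph with quotient matrix $S^{(w)}$.

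The key algebraic step is the identity $A_w=K_w\bigl(K_1^{-1}(A)\bigr)$. The Hamming scheme is metric (P-polynomial), so the matrices $A_0,\dots,A_n$ are simultaneously diagonalized by a fixed family of eigenspaces $V_0,\dots,V_n$; on $V_j$ the matrix $A$ acts as the scalar $K_1(j)=(q-1)n-qj$ and $A_w$ acts as $K_w(j)$. Because $K_1$ is an affine, hence invertible, function of its argument, with $K_1^{-1}(y)=\bigl((q-1)n-y\bigr)/q$, the composition $p(y):=K_w\bigl(K_1^{-1}(y)\bigr)$ is a genuine polynomial of degree $w$. Evaluating $p$ on each common eigenspace gives $p(A)|_{V_j}=K_w\bigl(K_1^{-1}(K_1(j))\bigr)=K_w(j)=A_w|_{V_j}$, and since the $V_j$ span the whole space this proves $p(A)=A_w$.

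Now I combine the two ingredients. From $AM=MS$ an immediate induction gives $A^kM=MS^k$ for every $k\ge0$, whence $p(A)M=Mp(S)$ for any polynomial $p$. Applying this to $p(y)=K_w\bigl(K_1^{-1}(y)\bigr)$ and using $p(A)=A_w$ yields
\[
 A_wM=p(A)M=Mp(S)=M\,K_w\bigl(K_1^{-1}(S)\bigr)=MS^{(w)},
\]
which is exactly the equitability statement sought. Integrality then comes for free: the entry $(A_wM)_{\bar v,j}=|\Gamma_w(\bar v)\cap P_j|$ is a nonnegative integer for every $\bar v$, and for $\bar v\in P_i$ it equals $S^{(w)}_{ij}$, so each entry of $K_w\bigl(K_1^{-1}(S)\bigr)$ is a nonnegative integer (here I use that each part $P_i$ is nonempty, as is standard for equitable partitions). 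The general distance-regular case is identical once the Krawtchouk sequence $K_w$ is replaced by the P-polynomial sequence of the graph and $K_1^{-1}$ by the inverse of its degree-one member.

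I expect the main obstacle to be the second paragraph, namely pinning down $A_w=K_w\bigl(K_1^{-1}(A)\bigr)$: everything else is formal manipulation of the relation $AM=MS$, but this step packages the full eigenvalue structure of the Hamming scheme (the Krawtchouk eigenvalues $K_w(j)$ together with the invertibility of $K_1$) and is where the specific combinatorics of $H(n,q)$ actually enters.
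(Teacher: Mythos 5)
Your proof is correct. The paper itself gives no proof of this lemma---it is quoted from the cited sources \cite{Martin:PhD,Kro:struct}---and your argument (encoding equitability as $AM=MS$ via the characteristic matrix, then using the P-polynomial property $A_w=K_w\bigl(K_1^{-1}(A)\bigr)$ of the Hamming scheme to get $A_wM=MK_w\bigl(K_1^{-1}(S)\bigr)$) is precisely the standard proof found in those references, so there is nothing further to compare.
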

The sequence of matrices $(S^{(w)})_{w=0}^n$
can be called the generalized weight distribution
of the equitable partition. Indeed,
if the all-zero word belongs to $P_i$,
then the number of vertices of weight $w$ in $P_j$
is precisely the $(i,j)$th element of the matrix
$S^{(w)}$. The following corollary shows
that for some putative intersection arrays we can easily,
without any calculations, see that $S^{(w)}$
is not integer for some $w$.
\begin{corollary}\label{c:fact}
 Assume that we have a completely regular code $C$ with intersection array
$ \{\beta_0, ... , \beta_{\rho-1}; \gamma_1, ..., \gamma_{\rho}\}$
 in a distance-regular graph with  intersection array
$ \{b_0, ... , b_{D-1}; c_1, ..., c_{D}\}$.
Then   for any appropriate $i\in\{1, ..., \rho\}$ and $j\in\{0, ..., \rho-i\}$ the product
$\beta_j \beta_{j+1} ... \beta_{j+i-1}$ of
$i$ consequent intersection numbers
(similarly, for $\gamma_{j+1} ... \gamma_{j+i}$)
is divisible by $c_1c_2 ... c_i$ (for Hamming graphs, by $i!$).
\end{corollary}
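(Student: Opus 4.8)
The plan is to push on Lemma~\ref{l:wd}: the generalized weight-distribution matrices $S^{(i)} = K_i(K_1^{-1}(S))$ all have integer entries (in fact non-negative integers, since they count words of a prescribed weight in the parts of the partition). The key structural input is that each $S^{(i)}$ is a polynomial of degree $i$ in $S = S^{(1)}$, and that the quotient map $\pi$ sending an element of the adjacency algebra of the host graph to its quotient matrix on the equitable partition is an algebra homomorphism. Applying $\pi$ to the standard three-term ($P$-polynomial) recurrence $A_1 A_i = b_{i-1}A_{i-1} + a_i A_i + c_{i+1}A_{i+1}$ of the host distance-regular graph therefore transfers it verbatim to the quotient matrices,
$$S^{(1)} S^{(i)} = c_{i+1}\,S^{(i+1)} + a_i\,S^{(i)} + b_{i-1}\,S^{(i-1)},$$
where $a_i,b_i,c_i$ are the host's intersection numbers (for $H(n,q)$ one has $c_i = i$).

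The core idea is to track a single family of extreme off-diagonal entries. Writing $f(i,j) := (S^{(i)})_{j,\,j+i}$ for the outermost super-diagonal entry, I would use the tridiagonality of $S$ (super-diagonal $\beta_j = S_{j,j+1}$, sub-diagonal $\gamma_j = S_{j,j-1}$) together with the band-vanishing $(S^{(i)})_{k,\ell} = 0$ for $|k-\ell| > i$, which holds because a degree-$i$ polynomial in a tridiagonal matrix has bandwidth at most $i$. Reading off the $(j,\,j+i+1)$ entry of the recurrence, all terms vanish for band reasons except two, leaving the clean relation $\beta_j\, f(i,j+1) = c_{i+1}\, f(i+1,j)$. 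Starting from $f(0,j) = (S^{(0)})_{j,j} = 1$, an immediate induction gives
$$f(i,j) = \frac{\beta_j\,\beta_{j+1}\cdots\beta_{j+i-1}}{c_1 c_2 \cdots c_i}.$$
Since $f(i,j)$ is an entry of the integer matrix $S^{(i)}$, the product $c_1\cdots c_i$ must divide $\beta_j\cdots\beta_{j+i-1}$; for a Hamming host this reads $i!\mid \beta_j\cdots\beta_{j+i-1}$. The restriction $j+i \le \rho$ (that is, $j\in\{0,\dots,\rho-i\}$) is precisely what keeps the entry $(j,j+i)$ inside the $(\rho{+}1)\times(\rho{+}1)$ quotient matrix, which explains the qualifier ``appropriate''. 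The statement about the $\gamma$'s follows from the mirror computation on the lower-corner entries $g(i,j) := (S^{(i)})_{j+i,\,j}$, which satisfy $\gamma_{j+i+1}\,g(i,j) = c_{i+1}\,g(i+1,j)$ and hence $g(i,j) = \gamma_{j+1}\cdots\gamma_{j+i}/(c_1\cdots c_i)$.

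The step I expect to be the main obstacle is nailing down, at the level of matrices, that the host's three-term recurrence genuinely descends to the $S^{(i)}$ and that the band-vanishing holds, so that the single-entry bookkeeping driving the induction is unambiguous. Both facts are consequences of $S^{(i)}$ being a degree-$i$ polynomial in the tridiagonal matrix $S$ and of the multiplicativity of the quotient map on polynomials in $A_1$; but because the whole argument hinges on isolating exactly one surviving term on each side of the recurrence, these structural claims should be stated and verified carefully rather than waved through. Once they are in place, the induction and the integrality conclusion are routine.
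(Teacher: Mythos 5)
Your proposal is correct, and both structural facts you flag as the ``main obstacle'' do hold: $S^{(i)}=p_i(S)$ is a degree-$i$ polynomial in the tridiagonal matrix $S$, so the bandwidth bound is immediate, and the three-term recurrence descends to the quotient because it is a polynomial identity in the defining recurrence of the $p_i$, hence survives substitution of $S$ (equivalently, because the quotient map is multiplicative on the adjacency algebra). However, your route is genuinely different from the paper's, which never touches Lemma~\ref{l:wd}. The paper fixes a vertex $v\in C^{(j)}$, lets $W_l$ be the distance-$l$ shell around $v$, and proves $c_i\,|W_i\cap C^{(j+i)}| = \beta_{j+i-1}\,|W_{i-1}\cap C^{(j+i-1)}|$ by double-counting the edges between these two sets: every edge leaving $W_{i-1}\cap C^{(j+i-1)}$ toward the next cell of the code's distance partition must land in the next shell, and every edge leaving $W_i\cap C^{(j+i)}$ toward the previous shell must land in the previous cell. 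Induction then gives $|W_i\cap C^{(j+i)}|=\prod_{l=1}^i \beta_{j+l-1}/c_l$, which is an integer because it is a cardinality. Your $f(i,j)=(S^{(i)})_{j,\,j+i}$ is exactly this cardinality and your recurrence $\beta_j f(i,j+1)=c_{i+1}f(i+1,j)$ is the same identity, so the two arguments compute the same number and differ only in justification. What your route buys: it makes the statement literally a corollary of Lemma~\ref{l:wd}, which is how the text preceding Corollary~\ref{c:fact} advertises it, and it exhibits the whole family of extreme band entries of the matrices $S^{(w)}$ at once. What the paper's route buys: it is elementary and self-contained, replacing the quotient-algebra setup (homomorphism property, P-polynomial recurrence, band structure) with two sentences of edge-counting, and so it needs no hypothesis beyond the definitions of completely regular code and distance-regular graph.
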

\begin{proof}
We fix any vertex $v$ from $C^{(j)}$ and denote
by $W_l$ the set of vertices at distance $l$ from $v$.
We will prove by induction that
\begin{equation} \label{eq:prod}
 |W_i \cap C^{(i)}| = \prod_{l=1}^{i} \frac{\beta_{j+l-1}}{c_{l}}.
\end{equation}\label{eq:ind}
For $i=0$, it is trivially $1=1$ 
(by usual convention, the result of multiplying no factors is $1$).
For $i>0$, it is sufficient to show that
\begin{equation}
{c_{i}}|W_i \cap C^{(i)}| =
{\beta_{j+i-1}}
|W_{i-1} \cap C^{(i-1)}|
.
\end{equation}

By the definition of a completely regular code,
the number of edges beginning in $W_{i-1} \cap C^{(i-1)}$ and ending in $C^{(i-1)}$
is $\beta_{j+i-1}|W_{i-1} \cap C^{(i-1)}|$.
Since $(C^{(t)})_t$ is a distance partition, all these edges end in $W_{i}$.

By the definition of a distance-regular graph,
the number of edges beginning in $W_{i} \cap C^{(i)}$ and ending in $W_{i-1}$
is $c_{i}|W_{i} \cap C^{(i)}|$.
Since $\{C^{(t)}\}_t$ is a distance partition, all these edges end in $C^{(i-1)}$.

So, by double-counting the edges connecting $W_{i-1} \cap C^{(i-1)}$ and $W_{i} \cap C^{(i)}$ we get \eqref{eq:ind}.
By induction, we have~\eqref{eq:prod}.
Since the left part of~\eqref{eq:prod}
is integer, the claim of the corollary is now obvious.
\end{proof}

\begin{corollary}\label{c:param}
 In Hamming graphs, there are no completely regular $1$-codes with the following
 putative intersection arrays:
 
$\{22,16,5;\underline{1,2,20}\}$;
$\{25,24,3;\underline{1,3},20\}$;
$\{26,24,19;\underline{1,3},8\}$;

$\{32,28,9;\underline{1,2,28}\}$;
$\{ 36,28,4;1,2,24 \}$;
$\{ 44,36,5;\underline{1,9},40 \}$;

$\{ 48,35,9;\underline{1,7},40 \}$;
$\{63,48,10;1,8,54 \}$;
$\{80,63,11;\underline{1,9},70 \}$.
\end{corollary}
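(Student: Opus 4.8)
The plan is to split the nine putative arrays into two groups according to whether the divisibility test of Corollary~\ref{c:fact} already settles them.

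For the seven underlined arrays I would simply invoke Corollary~\ref{c:fact}, reading the underlined run as the offending product. In a Hamming graph the divisor $c_1\cdots c_i$ equals $i!$, and in each underlined array the $i$ consecutive lower intersection numbers $\gamma_{j+1},\dots,\gamma_{j+i}$ that are underlined have a product that is not a multiple of $i!$. Thus for $\{22,16,5;\underline{1,2,20}\}$ and $\{32,28,9;\underline{1,2,28}\}$ the full lower product $\gamma_1\gamma_2\gamma_3$ equals $40$ and $56$, neither divisible by $3!=6$; while for each of $\{25,24,3;\underline{1,3},20\}$, $\{26,24,19;\underline{1,3},8\}$, $\{44,36,5;\underline{1,9},40\}$, $\{48,35,9;\underline{1,7},40\}$ and $\{80,63,11;\underline{1,9},70\}$ the two-term product $\gamma_1\gamma_2$ is odd and hence not divisible by $2!$. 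By Corollary~\ref{c:fact} no completely regular $1$-code with any of these arrays can exist.

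The two remaining arrays, $\{36,28,4;1,2,24\}$ and $\{63,48,10;1,8,54\}$, survive every consecutive-product test (for the $\beta$'s as well as the $\gamma$'s), so Corollary~\ref{c:fact} is powerless and I would deploy the full Lemma~\ref{l:wd}. First I fix the ambient Hamming graph: the number of cosets $v=\sum_{i=0}^{\rho}\beta_0\cdots\beta_{i-1}/(\gamma_1\cdots\gamma_i)$ is forced by the array and must be a prime power (the order of the coset graph), whose unique prime divisor is $q$; then $\beta_0=n(q-1)$ gives $n$. This yields $H(9,5)$ for the first array and $H(63,2)$ for the second. Next I form the tridiagonal quotient matrix $S$ of the putative distance partition, with subdiagonal and superdiagonal entries $\gamma_i$ and $\beta_i$ and diagonal $a_i=\beta_0-\beta_i-\gamma_i$, and evaluate the generalized weight distribution $S^{(w)}=K_w(K_1^{-1}(S))$ using $K_1^{-1}(S)=\bigl(n(q-1)I-S\bigr)/q$. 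By Lemma~\ref{l:wd} each $S^{(w)}$ must be an integer matrix (indeed, by the counting interpretation stated after the lemma, a non-negative integer one), so a single bad entry rules the array out. I expect the obstruction to surface already at $w=3$: a direct computation should show that $S^{(3)}$ has a non-integer entry for both arrays.

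Since the underlined cases reduce to a one-line arithmetic check each, the real work lies in the two Lemma~\ref{l:wd} cases: there one must actually evaluate the matrix Krawtchouk polynomial $K_w(K_1^{-1}(S))$ and pinpoint a forbidden entry. That evaluation, rather than any conceptual difficulty, is the main obstacle; everything else is bookkeeping.
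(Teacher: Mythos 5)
Your handling of the seven underlined arrays is exactly the paper's argument: $\gamma_1\gamma_2$ odd for the five arrays with $\underline{1,3}$, $\underline{1,7}$, $\underline{1,9}$, and $\gamma_1\gamma_2\gamma_3\in\{40,56\}$ not divisible by $3!$ for the two others, contradicting Corollary~\ref{c:fact}. For the remaining two arrays you also pick the right tool (integrality of $K_w(K_1^{-1}(S))$ from Lemma~\ref{l:wd}, with the obstruction indeed appearing at $w=3$). The problem is your reduction of these two cases to a single Hamming graph each.

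The gap is in the step ``$v$ must be a prime power, whose unique prime divisor is $q$.'' The corollary makes no linearity or additivity assumption on the code and no prime-power assumption on $q$; it concerns arbitrary completely regular $1$-codes in arbitrary $H(n,q)$, so there is no coset graph whose order you can invoke. What the equitable partition does give you is $|C^{(i)}|\beta_i=|C^{(i+1)}|\gamma_{i+1}$, hence $q^n=|C|\cdot v$ and $v\mid q^n$; but this only forces every prime divisor of $v$ to divide $q$, not $q$ to be a power of that prime. Concretely, for $\{36,28,4;1,2,24\}$ (where $v=625$) the constraints $n(q-1)=36$, $n\ge 3$, $5\mid q$ still allow $q=10$, $n=4$; and for $\{63,48,10;1,8,54\}$ (where $v=512$) they still allow $q=4$, $n=21$ and $q=8$, $n=9$ besides your $H(63,2)$. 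Even restricting to linear codes would not rescue the second case, since $512=8^3$ leaves $H(9,8)$ open. The paper avoids this trap entirely: it only uses $n(q-1)=\beta_0$ and $n\ge 3$, enumerates \emph{all} resulting alphabets ($q\in\{2,3,4,5,7,10,13\}$ for the first array --- note the non-prime-power $q=10$ --- and similarly for the second), and checks by direct calculation that $K_3(K_1^{-1}(S))$ fails to be integral in every one of these cases. To repair your proof you must either run the Krawtchouk computation over this full list of $(n,q)$, or justify a reduction of the extra cases (e.g., via the covering argument of Lemma~\ref{l:11.1.10}'s companion remark for prime-power $q$, which still leaves $q=10$ to be handled directly). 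A secondary soft spot: you state the $w=3$ non-integrality as an expectation rather than a verified computation, and it is precisely that verification, over all admissible $(n,q)$, that constitutes the proof.
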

\begin{proof}
 \newcommand\lB{\linebreak[2]}
 For all arrays except $\{ 36,\lB 28,\lB 4;\lB 1,\lB 2,\lB 24 \}$ and $\{63,\lB 48,\lB 10;\lB 1,\lB 8,\lB 54 \}$,
 the existence of completely regular codes contradicts Corollary~\ref{c:fact},
 which states that 
 $\gamma_1\gamma_2$ is divisible by $2$
 (false for
$\{25,\lB 24,\lB 3;\lB \underline{1,\lB 3},\lB 20\}$,
$\{26,\lB 24,\lB 19;\lB \underline{1,\lB 3},\lB 8\}$,
$\{ 44, \lB 36, \lB 5;\lB \underline{1,\lB 9},\lB 40 \}$,
$\{ 48,\lB 35,\lB 9;\lB \underline{1,\lB 7},\lB 40 \}$, and
$\{80,\lB 63,\lB 11;\lB \underline{1,\lB 9},\lB 70 \}$)
and $\gamma_1\gamma_2\gamma_3$ is divisible by $6$
(false for $\{22,\lB 16,\lB 5;\lB \underline{1,\lB 2,\lB 20}\}$ and 
$\{32,\lB 28,\lB 9;\lB \underline{1,\lB 2,\lB 28}\}$).

 For $\{ 36,28,4;1,2,24 \}$, since $36$ is the degree $(q-1)n$ of the Hamming graph
 $H(n,q)$, $n\ge 3$, we have $q\in\{2,\lB 3,\lB 4,\lB 5,\lB 7,\lB 10,\lB 13\}$. In all the cases,
 direct calculations show that $K_3(K_1^{-1}(S))$ is not integer,
 where $K_w(\cdot)=K_w(\cdot;n,q)$ and $S$ is the
 quotient matrix corresponding to the array $\{ 36,\lB 28,\lB 4;\lB 1,\lB 2,\lB 24 \}$.

 For $\{63,48,10;1,8,54 \}$, the proof is similar.
\end{proof}

\section[\{42,30,12;1,6,28\}, q=7]{\{42,30,12;1,6,28\}, \boldmath $q=7$} \label{s:42}

A putative distance-regular graph with intersection array $\{42,30,12;1,6,28\}$
has order $343=7^3$. If such a graph can be realized as the coset graph
of a linear completely regular code in a Hamming graph,
then it is a $7$-ary code of length $7=42/(6-1)$, dimension $4=7-3$,
distance $3$, and covering radius $3$.
The weight distributions of the code and of its dual are
$( 1, 0, 0, 42, 42, 630, 840, 846)$ and $(1, 0, 0, 0, 42, 0, 210, 90)$, respectively.

\begin{proposition}\label{p:7}
If $q$ is an odd prime power larger than $4$,
then there are no $q$-ary codes of length $7$,
dimension $3$,
and non-zero weights $4$, $6$, $7$.
\end{proposition}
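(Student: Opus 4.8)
The plan is to recast the statement in the language of finite projective geometry. Realize the code as the row space of a $3\times 7$ generator matrix $G$ of rank $3$ over $\FF_q$; assuming first that $G$ has no zero column, its columns form a multiset $M$ of $7$ points of the projective plane $PG(2,q)$. For a nonzero $a\in\FF_q^3$ the codeword $aG$ is zero in a given coordinate exactly when the corresponding column lies on the line $a^{\perp}$, so $\operatorname{wt}(aG)=7-m(\ell)$, where $\ell=a^{\perp}$ and $m(\ell)$ counts the points of $M$ on $\ell$ with multiplicity. Letting $a$ range over all nonzero vectors up to scalars (hence $\ell$ over all $q^2+q+1$ lines), the requirement that the nonzero weights lie in $\{4,6,7\}$ becomes the condition that every line meets $M$ in $0$, $1$, or $3$ points; the value $m(\ell)=7$ is excluded since $G$ has rank $3$.

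First I would rule out repeated columns for $q>4$. If some point $P$ occurred with multiplicity $\mu\ge2$, then each of the $q+1$ lines through $P$ would contain at least $\mu$ points of $M$, forcing $m(\ell)=3$ there and $\mu\in\{2,3\}$. The case $\mu=3$ is impossible because then no line through $P$ could contain a further point of $M$, yet $M$ has other points; the case $\mu=2$ forces each of the $q+1$ lines through $P$ to carry exactly one further point, so $M$ would contain $q+1$ points besides $P$, whereas it has only $5$, giving $q=4$. Hence for $q>4$ the set $M$ consists of $7$ distinct points.

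With $M$ a $7$-point set, a double count pins down the configuration. Writing $n_j$ for the number of lines $\ell$ with $m(\ell)=j$, the identities $n_1+3n_3=\sum_{\ell}m(\ell)=7(q+1)$ and $n_1+9n_3=\sum_{\ell}m(\ell)^2=7q+49$ give $n_3=7$. These $7$ three-point lines carry $7\binom{3}{2}=21$ pairs of points, exactly all $\binom{7}{2}=21$ pairs, so every pair of points of $M$ lies on one of them. Thus $M$ together with these $7$ lines is a Fano configuration, that is, a subplane $PG(2,2)$ of $PG(2,q)$.

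The final step, which is the heart of the matter, is the classical fact that $PG(2,q)$ admits such a Fano subplane only when $q$ is even: equivalently, by Fano's axiom, the three diagonal points of a complete quadrangle are collinear precisely in characteristic $2$. Choosing the quadrangle to be the four points of $M$ lying off one of the seven three-point lines $\ell$, one checks that its three diagonal points are exactly the remaining three points, all on $\ell$, hence collinear; this forces $q$ even and contradicts $q$ odd. I expect this geometric impossibility to be the main obstacle, since the counting alone is consistent: for odd $q$ it yields the perfectly integral and nonnegative distribution $A_4=7(q-1)$, $A_6=7(q-1)(q-2)$, $A_7=(q-1)(q-2)(q-4)$, so the contradiction cannot come from a numerical obstruction and must be read off the Fano structure. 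Lastly, the degenerate situations deferred at the outset, where $G$ has one or more zero columns and the effective length drops below $7$, are settled by the same double counting: they lead to point sets meeting every line in $\{0,2\}$ points (for length $6$) or in a single point (for length $5$), and the corresponding identities force $q\mid 24$ or a non-integral line count, both impossible for odd $q>4$.
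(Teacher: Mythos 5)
Your proof is correct, but it takes a genuinely different route from the paper's. The paper works directly with a systematic generator matrix: each row must have weight $4$; for $q>4$, two rows sharing two zero coordinates could be combined into a forbidden weight-$5$ word, which forces the matrix into a normalized form whose row sum $(-1,a^{-1},x^{-1},2,0,0,1)$ has weight $5$ because $2\neq 0$ in odd characteristic. You instead dualize geometrically: the columns form a multiset of $7$ points of $\mathrm{PG}(2,q)$ met by every line in $0$, $1$ or $3$ points; for $q>4$ repeated points are impossible, the double count forces exactly $7$ trisecants carrying all $\binom{7}{2}=21$ pairs, i.e.\ a Fano subplane, and Fano's axiom (the diagonal points of a complete quadrangle are collinear exactly in characteristic $2$) rules out odd $q$. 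Note the parallel: both proofs use $q>4$ only to exclude degenerate configurations, and both use oddness only through $2\neq 0$ at the very last step. What your approach buys is the structural explanation: such codes are precisely Fano subplanes, so they genuinely exist for even $q$ (e.g.\ in $\mathrm{PG}(2,4)$), showing both hypotheses are sharp; and your observation that $A_4=7(q-1)$, $A_6=7(q-1)(q-2)$, $A_7=(q-1)(q-2)(q-4)$ is integral and nonnegative for all $q$ shows that no purely numerical argument could succeed. The paper's proof, in exchange, is shorter and entirely self-contained. Two small patches to your write-up: in the length-$6$ degenerate case the count forces $q=4$ exactly (a $6$-set met by every line in $0$ or $2$ points is a hyperoval, so $q+2=6$), not ``$q\mid 24$''; and you should also dismiss three or more zero columns, which is immediate since a code of length at most $4$ and dimension $3$ has minimum distance at most $2$ by the Singleton bound, while all prescribed weights are at least $4$.
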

\begin{proof}
 Assume that such a code $C$ exists. W.l.o.g., it has a generator matrix
 $$
 \left(
 \begin{array}{cccccccc}
 1 & 0 & 0 & ? & ? & ? & ? \\
 0 & 1 & 0 & ? & ? & ? & ? \\
 0 & 0 & 1 & ? & ? & ? & ?
 \end{array}
 \right)
 $$
 We note that the weight of each row can only be $4$,
 and two rows do not have two common zero positions
 (otherwise, for $q>4$, they can be combined in a weight-$5$ codeword
 contradicting the hypothesis).
 So, w.l.o.g., we have
  $$
 \left(
 \begin{array}{cccccccc}
 1 & 0 & 0 & 0 & 1 & 1 & 1 \\
 0 & 1 & 0 & a & 0 & b & c \\
 0 & 0 & 1 & x & y & 0 & z
 \end{array}
 \right).
 $$
 If $b\ne c$, then there is a weight-$5$
 linear combination of the first two rows, a contradiction.
 So, $b=c$; similarly, $y=z$; and, if we assume w.l.o.g.
 that $a=b$, we also see $x=y$:
   $$
 \left(
 \begin{array}{cccccccc}
 1 & 0 & 0 & 0 & 1 & 1 & 1 \\
 0 & 1 & 0 & a & 0 & a & a \\
 0 & 0 & 1 & x & x & 0 & x
 \end{array}
 \right).
 $$
Multiplying rows by coefficients, we get
$$
 \left(
 \begin{array}{cccccccc}
 -1 & 0 & 0 & 0 & -1 & -1 & -1 \\
 0 & a^{-1} & 0 & 1 & 0 & 1 & 1 \\
 0 & 0 & x^{-1} & 1 & 1 & 0 & 1
 \end{array}
 \right).
 $$
Now, the sum of the rows $(-1,a^{-1},x^{-1},2,0,0,1)$
has weight $5$, a contradiction.
\end{proof}

\begin{corollary}\label{c:7}
 There are no linear completely regular codes with intersection array
 $\{42,30,12;1,6,28\}$ in $H(7,7)$.
\end{corollary}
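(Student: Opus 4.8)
The plan is to reduce the statement to Proposition~\ref{p:7} by passing to the dual code. Suppose, toward a contradiction, that a linear completely regular code $C\subseteq\FF_7^7$ with intersection array $\{42,30,12;1,6,28\}$ exists. By Lemma~\ref{l:cr-dr} its coset graph is the distance-regular graph with this array, and, as recorded at the beginning of this section, $C$ is a $[7,4,3]_7$ code of covering radius $3$. Consequently its dual $C^\perp$ is a $[7,3]_7$ linear code, and the goal is to show that $C^\perp$ is exactly the kind of code forbidden by Proposition~\ref{p:7}.

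The key step is to pin down the weight distribution of $C^\perp$. Since $C$ is linear, its distance distribution coincides with its weight distribution, and the MacWilliams transform of the latter is precisely the weight distribution of $C^\perp$. For a completely regular code the number of distinct nonzero dual weights (the external distance) equals the covering radius $\rho=3$, so this transform is supported on exactly $\rho+1=4$ coordinates, one of which is $0$; the remaining three positions, together with their multiplicities, are determined by the eigenvalues of the quotient matrix $S$ attached to $\{42,30,12;1,6,28\}$. Carrying this out gives the dual weight distribution $(1,0,0,0,42,0,210,90)$ already stated above, so the nonzero weights occurring in $C^\perp$ are precisely $4$, $6$, and $7$.

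It then remains only to invoke Proposition~\ref{p:7} with $q=7$: since $7$ is an odd prime power larger than $4$, there is no $7$-ary code of length $7$, dimension $3$, and nonzero weights $4$, $6$, $7$. This contradicts the existence of $C^\perp$, and hence of $C$, completing the argument. I expect the only genuinely delicate point to be the second step, namely the justification that the dual weight distribution is forced to be $(1,0,0,0,42,0,210,90)$ rather than the trivial bookkeeping of the final contradiction. This is exactly the Delsarte dual-distribution theory for completely regular codes (support of size $\rho+1$, nonzero positions read off from the spectrum of $S$), and is precisely the weight-distribution computation recorded before Proposition~\ref{p:7}; once it is in hand, the corollary follows with no further calculation.
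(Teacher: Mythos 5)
Your proposal is correct and follows essentially the same route as the paper: the section establishes that the dual of such a code would be a $[7,3]_7$ code with weight distribution $(1,0,0,0,42,0,210,90)$, i.e.\ nonzero weights $4$, $6$, $7$, and the corollary is then immediate from Proposition~\ref{p:7} with $q=7$. Your additional justification of the dual weight distribution (external distance equal to covering radius for completely regular codes, dual weights read off from the spectrum of the quotient matrix) is a correct filling-in of a computation the paper merely asserts.
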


\section[\{52,42,16;1,6,28\}, q=5]{\{52,42,16;1,6,28\}, \boldmath $q=5$}\label{s:52}

A putative distance-regular graph with intersection array $\{52,42,16;1,6,28\}$
has order $625=5^4$. If such a graph can be realized as the coset graph
of a linear completely regular code in a Hamming graph,
then it is a $5$-ary code of length $13=52/(5-1)$, dimension $9=13-4$,
distance $3$, and covering radius $3$.
The weight distributions of the code and of its dual are
$(1,$ $0,$ $0,$ $52,$ $260,$ $2028,$ $10660,$ $48620,$ $128076,$
$305240,$ $479596,$ $521040,$ $350480,$ $107072)$
 and $(1,0,0,0,0,0,0,52,0,0,364,0,208,0)=\{0^1,7^{52},10^{364},12^{208}\}$, respectively.

\begin{proposition}\label{p:5}
There is no $5$-ary code of length $13$, dimension $4$, and non-zero weights $7$, $10$, $12$.
\end{proposition}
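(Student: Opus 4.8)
The plan is to pass to the geometry of the generator matrix. Let $G$ be a $4\times 13$ generator matrix of the putative code, with columns $g_1,\dots,g_{13}\in\FF_5^4$, and regard these columns as a multiset $S$ of points of the projective space $\mathrm{PG}(3,5)$, a point $P$ carrying the multiplicity $m(P)$ equal to the number of columns lying on the line $\langle P\rangle$. For a nonzero $u\in\FF_5^4$ the codeword $uG$ has weight $13-m(u^{\perp})$, where $m(H):=\sum_{P\in H}m(P)$ and $u^{\perp}$ is the hyperplane (a plane of $\mathrm{PG}(3,5)$) annihilated by $u$. Since the admissible nonzero weights are $7,10,12$, the hypothesis is exactly that every plane $H$ of $\mathrm{PG}(3,5)$ satisfies $m(H)\in\{1,3,6\}$ (in particular $m(H)\neq 0$, which is consistent with the dimension being $4$, i.e.\ with $S$ spanning the whole space and with $m(H)\le 6<13$).

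The key step is a counting identity along a line. Fix any line $\ell\subset\mathrm{PG}(3,5)$. Exactly $q+1=6$ planes pass through $\ell$; each point off $\ell$ lies on exactly one of them, while each point of $\ell$ lies on all six. Summing multiplicities over these six planes therefore gives
\[
 \sum_{\pi\supseteq\ell}m(\pi)=6\,m(\ell)+\bigl(13-m(\ell)\bigr)=5\,m(\ell)+13 .
\]
Now suppose $m(\ell)\ge 2$. Every plane through $\ell$ contains $\ell$, hence $m(\pi)\ge m(\ell)\ge 2$, so $m(\pi)\in\{3,6\}$ for all six planes; in particular the left-hand side is divisible by $3$. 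Reducing the right-hand side modulo $3$ gives $5m(\ell)+13\equiv 2m(\ell)+1\pmod 3$, which vanishes only when $m(\ell)\equiv 1\pmod 3$; in the range $2\le m(\ell)\le 6$ this leaves only $m(\ell)=4$. But $m(\ell)=4$ forces every $m(\pi)\ge 4$, hence $m(\pi)=6$ and $\sum_{\pi}m(\pi)=36\neq 33=5\cdot 4+13$, a contradiction; and $m(\ell)\ge 7$ is impossible, since then some plane would have $m(\pi)\ge 7>6$. Hence every line of $\mathrm{PG}(3,5)$ satisfies $m(\ell)\le 1$.

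This last conclusion is absurd, and that is where the proof closes: among the $13$ columns either two are non-proportional, spanning a line with $m\ge 2$, or all are proportional, in which case a single point carries multiplicity $13$ and every line through it has $m\ge 13$. Either way some line has $m(\ell)\ge 2$, contradicting $m(\ell)\le 1$; thus no $[13,4]_5$ code with nonzero weights in $\{7,10,12\}$ exists. I expect the only delicate point to be the bookkeeping with repeated columns, but the multiset formulation is designed precisely to absorb this, so that projectivity of the code never has to be assumed separately and the divisibility obstruction ``$3\nmid 5$'' does all of the work.
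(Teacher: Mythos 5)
Your proof is correct and is essentially the paper's own argument in projective clothing: your line $\ell$ together with the six planes through it is exactly the paper's $2$-dimensional subspace $P$ with its six hyperplanes $H_i$, your counting identity $\sum_{\pi\supseteq\ell} m(\pi)=5m(\ell)+13$ is the paper's $13=\sum_i b_i-5a$, and both proofs finish with the same divisibility-by-$3$ step forcing $a=m(\ell)=4$ followed by the same contradiction (all six hyperplanes would then have to meet the column multiset in $6$ points, which is inconsistent with the count). The only differences are cosmetic: multiset bookkeeping for repeated columns and an explicit final case analysis guaranteeing a line of multiplicity at least $2$, which the paper leaves implicit.
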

\begin{proof}
 Let $G$ be a $4\times 13$ generator matrix of a putative code with considered parameters.
 The columns of $G$ form a set $S$ of $13$ points of the vector space $\mathbb{F}_5^4 = \mathrm{GF}(5)^{4}$.
 Any ($3$-dimensional) hyperplane $H\ni \bar 0$ in $\mathbb{F}_5^4$ is determined by its dual vector
 $\bar h$: $H=\{\bar x \in \mathbb{F}_5^4 : \bar x\cdot \bar h=0\}$.
 Since $\bar h \cdot G$ is a codeword of weight $7$, $10$, or $12$,
 any such hyperplane intersects with $S$ in $6=13-7$, $3=13-10$, or $1=13-12$ points.
 Now, consider a ($2$-dimensional) plane $P$ containing at least two points of $S$.
 It is included in exactly six 
 $3$-dimensional hyperplanes, call them $H_i$, $i=0,1,2,3,4,5$.
 In the rest of the proof, we will show that any intersection numbers
 of $S$ with $P$ and $H_i$, $i=0,1,2,3,4,5$, are contradictory.
 Denote $a:= |S \cap P|$, $a>1$, and $b_i :=|S \cap H_i|$, $b_i\in\{3,6\}$.
 Now we have
 $13=|S| = a + \sum_{i=0}^5 (b_i-a) = \sum_{i=0}^5 b_i-5a = 6\cdot 3 + k\cdot 3 - 5a$,
 where $k$ is the number of $i$ in $\{0,1,2,3,4,5\}$ such that $b_i=6$.
 We derive $k=5(a-1)/3$. Since $k$ is integer, we have $a=4$ and $k=5$.
 However, $a=4$ also implies that $b_i$ cannot be $3$ and hence $k=6$, a contradiction.
\end{proof}

\begin{corollary}\label{c:5}
 There are no linear completely regular codes with intersection array
 $\{52,42,16;1,6,28\}$ in $H(13,5)$.
\end{corollary}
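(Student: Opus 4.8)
The plan is to reduce the nonexistence of the completely regular code to the nonexistence result already established in Proposition~\ref{p:5}, by passing to the dual code. First I would suppose, for contradiction, that a linear completely regular code $C$ with intersection array $\{52,42,16;1,6,28\}$ exists in $H(13,5)$. As noted at the start of this section, such a $C$ must be a $[13,9,3]_5$ code, so its dual $C^{\perp}$ is a $[13,4]_5$ code. The claim then follows once I show that the non-zero weights of $C^{\perp}$ are exactly $7$, $10$, and $12$, since this is precisely the configuration forbidden by Proposition~\ref{p:5}.

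The key step is thus to read off the weight distribution of $C^{\perp}$ from the intersection array. The coset graph $\Gamma_C$ is a Cayley graph on $\FF_5^{13}/C \cong \FF_5^4$, and its eigenvalues are indexed by the characters $\chi_{\bar h}$ with $\bar h \in C^{\perp}$, the eigenvalue attached to $\bar h$ being $\sum_{i=1}^{13}\sum_{\mu\in\FF_5^{*}} \omega^{\mu h_i}$, where $\omega$ is a primitive $5$th root of unity. A short computation (each zero coordinate of $\bar h$ contributes $q-1=4$, each nonzero coordinate contributes $-1$) gives this sum as $52 - 5\,\mathrm{wt}(\bar h)$, so the weight $w$ of a dual codeword is recovered from its eigenvalue $\lambda$ via $w=(52-\lambda)/5$. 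It then remains to compute the four eigenvalues of the distance-regular graph with the given array, equivalently of its tridiagonal quotient matrix $L$ with rows $(0,52,0,0)$, $(1,9,42,0)$, $(0,6,30,16)$, $(0,0,28,24)$; these are $52,17,2,-8$, which translate into dual weights $0,7,10,12$. This confirms the distribution $\{0^1,7^{52},10^{364},12^{208}\}$ already recorded above, so $C^{\perp}$ has non-zero weights exactly $7,10,12$.

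With the dual weight distribution in hand, the contradiction is immediate: $C^{\perp}$ is a $5$-ary code of length $13$, dimension $4$, and non-zero weights $7,10,12$, a code ruled out by Proposition~\ref{p:5}. I do not expect any serious obstacle at this stage, since all the genuinely combinatorial work has been front-loaded into Proposition~\ref{p:5} and the weight distribution of the dual is forced by the intersection array. The only care needed is the bookkeeping in the eigenvalue-to-weight translation and the verification that the three non-trivial eigenvalues of $L$ are indeed $17,2,-8$ (consistency checks such as $\mathrm{tr}\,L = 0+9+30+24 = 63 = 52+17+2-8$ and $1+52+364+208 = 625 = 5^4$ guard against arithmetic slips).
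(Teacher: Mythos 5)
Your proof is correct and takes essentially the same route as the paper: both arguments reduce the corollary to Proposition~\ref{p:5} by observing that the dual of the putative $[13,9,3]_5$ code would be a $[13,4]_5$ code whose non-zero weights are exactly $7$, $10$, $12$. The only difference is that the paper simply states the dual weight distribution $\{0^1,7^{52},10^{364},12^{208}\}$ (it follows from the standard theory behind Lemma~\ref{l:wd}), whereas you verify it explicitly through the character-sum eigenvalues $52-5w$ of the coset graph and the eigenvalues $52,17,2,-8$ of the quotient matrix, which is a correct and self-contained way to fill in that step.
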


\section[\{56,42,20;1,6,28\}, q=3]{\{56,42,20;1,6,28\}, \boldmath $q=3$}\label{s:56}

A putative distance-regular graph with intersection array $\{56,42,20;1,6,28\}$
has order $729=3^6$. If such a graph can be realized as the coset graph
of a linear completely regular code in a Hamming graph,
then it is a $3$-ary code of length $28=56/(3-1)$, dimension $22=28-6$,
distance $3$, and covering radius $3$.
The weight distributions of the dual code is
$\{0^1, 12^{56}, 18^{392}, 21^{280}\}$.

\begin{proposition}\label{p:3}
There is no $3$-ary code of length $28$, dimension $6$, and non-zero weights $12$, $18$, $21$.
\end{proposition}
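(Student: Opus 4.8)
The plan is to mirror the projective--geometry method of Proposition~\ref{p:5}. Let $G$ be a $6\times 28$ generator matrix of a putative code, and let $S$ be the set of its columns, a (multi)set of $28$ points spanning $\FF_3^6$. Every hyperplane $H\ni\bar 0$ is the kernel of a nonzero dual vector $\bar h$ (unique up to a scalar, and over $\FF_3$ each $H$ corresponds to $q-1=2$ vectors), and $\bar h\cdot G$ is a codeword of weight $28-|S\cap H|$. Since the nonzero weights are $12,18,21$, each hyperplane meets $S$ in $16$, $10$, or $7$ points. Dividing the weight frequencies $56,392,280$ by $q-1=2$ gives $A_{16}=28$, $A_{10}=196$, $A_{7}=140$ hyperplanes, totalling $364=(3^6-1)/(3-1)$; so \emph{every} hyperplane is of one of these three types. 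First I would record that the first three power moments are then forced to be consistent (one checks $\sum_H|S\cap H|=28\cdot 121$ and $\sum_H\binom{|S\cap H|}{2}=\binom{28}{2}\cdot 40$ hold identically), so the weight enumerator passes every low-order MacWilliams/linear-programming test and the obstruction must be genuinely geometric.

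Next I would run the codimension-two analysis that in Proposition~\ref{p:5} played the role of the plane argument. A $4$-dimensional subspace $T$ lies in exactly $q+1=4$ hyperplanes $H_0,\dots,H_3$, and these partition $S\setminus T$. Writing $a=|S\cap T|$ and $b_i=|S\cap H_i|\ge a$, one gets $\sum_i b_i=28+3a$, equivalently $a=k_{10}+3k_{16}$, where $k_j$ is the number of $H_i$ with $b_i=j$ and $k_7+k_{10}+k_{16}=4$. The constraint $\min_i b_i\ge a$ excludes exactly the configuration $(k_7,k_{10},k_{16})=(1,0,3)$, so no codimension-two subspace has $a_T=9$ (and similarly $a_T\notin\{11,13,14,15\}$, $a_T\le 12$). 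The decisive difference from Proposition~\ref{p:5} is that the two-value configurations forced when $a\ge 8$ (namely $a=4+2k_{16}$ with $b_i\in\{10,16\}$) remain internally consistent, because $10\ge a$; hence no single subspace yields a contradiction, and the clean "only two admissible values'' collapse used for $q=5$ does not recur here.

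To finish I would globalize: over the $11011$ subspaces $T$ of codimension two, combine the incidence identities $\sum_T k_s(T)k_t(T)=A_sA_t$ (any two distinct hyperplanes meet in a unique codimension-two subspace) with the pair count $\sum_T\binom{a_T}{2}=\binom{28}{2}\cdot 130$, aiming to over-determine the joint type distribution and force a negative or non-integral frequency. \textbf{The main obstacle is precisely this global step}: since the enumerator already satisfies all first- and second-order identities, one must track the joint distribution of $a_T$ and $k_{16}(T)$ to higher order, and I expect this bookkeeping to be the crux; failing a clean closure, one falls back on the methodology announced in the introduction, namely an exhaustive classification of $[28,6]_3$ codes with weight set $\{12,18,21\}$ via \cite{Kurz:LINCODE} or \cite{QextNewEdition}, which returns no code. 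As an auxiliary simplification I would note that, over $\FF_3$, $\bar u\cdot\bar u\equiv\mathrm{wt}(\bar u)\pmod 3$, so all weights being divisible by $3$ and linearity force $\bar u\cdot\bar v=0$ for all codewords; thus the code is self-orthogonal, a genuine restriction on the admissible generator matrices that should cut down either the combinatorial casework or the search space, though it does not by itself exclude the parameters.
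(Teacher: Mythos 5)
The first thing to know is that the paper has no theoretical proof of Proposition~\ref{p:3} at all: it says so explicitly, and establishes nonexistence by exhaustive computer classification using \cite{Kurz:LINCODE} (about an hour) and \cite{QextNewEdition} (about ten seconds). Since your proposal, after its geometric preliminaries, explicitly falls back on exactly that computation, your operative proof coincides with the paper's, and your conclusion is correct. What you add beyond the paper is a correct diagnosis of why the method of Proposition~\ref{p:5} does not extend: your numbers all check out (hyperplane intersection sizes $7$, $10$, $16$ with frequencies $140$, $196$, $28$, exhausting all $364$ hyperplanes; the identities $\sum_H|S\cap H|=28\cdot 121$ and $\sum_H\binom{|S\cap H|}{2}=\binom{28}{2}\cdot 40$ holding automatically; in the codimension-two analysis $a_T=k_{10}+3k_{16}$ with the constraint $a_T\le\min_i b_i$ excluding only the type $(1,0,3)$), so indeed no single subspace yields the contradiction that finishes the $q=5$ case. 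The self-orthogonality observation (all weights divisible by $3$ forces $\bar u\cdot\bar v=0$ over $\FF_3$) is also correct and a sensible search-space reduction.

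Be clear, however, that if the fallback were removed, the proposal would have a genuine gap, and I doubt the ``global step'' can close as sketched. The identities you propose to exploit, $\sum_T k_s(T)k_t(T)=A_sA_t$ and $\sum_T\binom{a_T}{2}=\binom{28}{2}\cdot 130$, are automatic double-counting consequences of the known hyperplane frequencies (every pair of distinct hyperplanes meets in a unique codimension-two subspace; every pair of independent points lies in exactly $130$ such subspaces), so a contradiction could only come from infeasibility of the resulting nonnegative-integer program over the admissible local types. With roughly fourteen admissible types and only about ten linear constraints, that system is very likely feasible, so second-order counting almost certainly does not suffice; this is consistent with the fact that nonexistence apparently requires a nontrivial exhaustive classification. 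In short: your theoretical route is sound but inconclusive exactly where you flag it, and the proof of record --- both yours and the paper's --- is the computer check.
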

We currently do not have a theoretical proof of the last proposition.
 The nonexistence of a code with the specified parameters was checked
 using the software~\cite{Kurz:LINCODE} (1 hour of computation)
 and~\cite{QextNewEdition} (10 sec. of computation).

\begin{corollary}\label{c:3}
 There are no linear completely regular codes with intersection array
 $\{56,42,20;1,6,28\}$ in $H(28,3)$.
\end{corollary}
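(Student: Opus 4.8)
The plan is to recast the question geometrically, exactly as in the proof of Proposition~\ref{p:5}, and then attempt a double-counting argument over subspaces. A putative code $D$ of this type has a $6\times 28$ generator matrix $G$ over $\FF_3$, and its columns form a set $S$ of $28$ points of the projective space $PG(5,3)$ (the columns are pairwise non-proportional, since in our application $D$ is the dual of a code of minimum distance at least $3$, so $S$ has no repeated point). A nonzero codeword $\bar h G$ has weight equal to $28$ minus the number of columns annihilated by $\bar h$; as the nonzero weights are $12$, $18$, $21$, every hyperplane of $PG(5,3)$ meets $S$ in exactly $7$, $10$, or $16$ points. First I would fix the frequencies: the zeroth, first, and second power moments give three linear equations in the multiplicities $A_{12},A_{18},A_{21}$ whose coefficient matrix is Vandermonde in the distinct weights $12,18,21$, so the solution is unique and equals the recorded distribution $\{0^1,12^{56},18^{392},21^{280}\}$. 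Dividing by $q-1=2$, this says there are exactly $28$, $196$, and $140$ hyperplanes meeting $S$ in $16$, $10$, and $7$ points; one checks the incidence identity $16\cdot 28+10\cdot 196+7\cdot 140=3388=28\cdot 121$, where $121$ is the number of hyperplanes through a point of $PG(5,3)$.

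Next I would impose the standard subspace equations. Fixing a line $\ell\subset PG(5,3)$, write $a:=|S\cap\ell|\in\{0,1,2,3,4\}$ and let $n_{16},n_{10},n_7$ count the hyperplanes of each type containing $\ell$. Using that there are $40$ hyperplanes through $\ell$, that a point of $S$ on $\ell$ lies in all of them, and that a point of $S$ off $\ell$ lies in the $13$ hyperplanes through the plane it spans with $\ell$, one obtains
\begin{align*}
 n_{16}+n_{10}+n_7 &= 40,\\
 16\,n_{16}+10\,n_{10}+7\,n_7 &= 27a+364.
\end{align*}
A second count, of incident pairs of points of $S$ lying in a common hyperplane through $\ell$, yields a further quadratic relation, and the whole scheme can be repeated with $\ell$ replaced by a plane or a solid. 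The hope is to combine these local identities with the global frequencies and with the restricted range of $a$ to force a parity or divisibility contradiction, in the spirit of the relation $k=5(a-1)/3$ that collapsed the $q=5$ case.

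The hard part, and the reason this proposition is settled by machine rather than by hand, is that over $\FF_3$ in an ambient space of this dimension the linear and quadratic counting relations admit nonnegative integer solutions, so no single identity closes the argument. A genuinely theoretical proof would have to exploit the finer structure of $S$---for instance the fact that $D$ is $3$-divisible, or a classification of the admissible plane- and solid-sections---to show that the local data cannot be glued into a global $28$-point configuration with the prescribed hyperplane spectrum. Lacking such a structural result, I would finish as the authors did: the problem reduces to the finite classification of $[28,6]_3$ codes with weight set $\{0,12,18,21\}$, which is decided by an exhaustive, isomorph-free computer search with \cite{Kurz:LINCODE} or \cite{QextNewEdition}.
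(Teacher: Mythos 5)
Your proposal is correct and takes essentially the same route as the paper: the problem is reduced to the nonexistence of the dual code, a $[28,6]_3$ code with nonzero weights $12$, $18$, $21$ (weight distribution $\{0^1,12^{56},18^{392},21^{280}\}$), which is exactly Proposition~\ref{p:3}, settled by the exhaustive computer searches of \cite{Kurz:LINCODE} and \cite{QextNewEdition}. Your geometric counting in $PG(5,3)$ is numerically sound but, as you yourself note, does not close the argument by hand, matching the paper's admission that no theoretical proof of Proposition~\ref{p:3} is currently known.
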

The following straightforward corollary of the fact above
has an easy computer-free proof.
\begin{corollary}\label{c:9}
 There are no linear completely regular codes with intersection array
 $\{56,42,20;1,6,28\}$ in $H(7,9)$.
\end{corollary}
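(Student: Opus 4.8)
The plan is to reduce the nonexistence over $\FF_9$ to the nonexistence over $\FF_3$ already established in Corollary~\ref{c:3}, exploiting that an additive (in particular, linear) code over $\FF_9$ automatically carries an $\FF_3$-linear structure. So I would argue by contradiction: assume a linear completely regular code $C$ with intersection array $\{56,42,20;1,6,28\}$ exists in $H(7,9)$, and from it manufacture a linear completely regular code with the \emph{same} array in $H(28,3)$, which Corollary~\ref{c:3} forbids.

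First I would pass to the coset graph. Since $C$ has covering radius $3$ and $c_1=1$ forces minimum distance at least $3$, Lemma~\ref{l:cr-dr} shows that the coset graph $\Gamma_C$ is distance-regular of diameter $3$ with intersection numbers $56,42,20,1,6,28$. The key structural remark is that $\Gamma_C$ is a Cayley graph on the additive quotient group $\FF_9^{7}/C$. Viewing $\FF_9^{7}$ as $(\FF_3^{14},+)$, it is an elementary abelian $3$-group, and $C$ is a subgroup; hence $\FF_9^{7}/C$ is again an elementary abelian $3$-group, so $\Gamma_C$ is a Cayley graph on an elementary abelian $3$-group.

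Now I would invoke Lemma~\ref{l:11.1.10} (legitimate because the diameter is $3\ge 3$): $\Gamma_C$ is the coset graph of a \emph{linear} completely regular code over $\FF_3$ with the same intersection array $\{56,42,20;1,6,28\}$. It only remains to locate the ambient Hamming graph of this $\FF_3$-code. The degree of its coset graph equals $b_0=56$, while the coset graph of a length-$N$ code over $\FF_3$ of minimum distance at least $3$ has degree $N(3-1)=2N$; thus $N=28$ and the code lies in $H(28,3)$, contradicting Corollary~\ref{c:3}.

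Since every step is a direct application of a result stated above, I do not anticipate a genuine obstacle; the one point needing care is the final length identification, i.e.\ confirming that the $\FF_3$-realization produced by Lemma~\ref{l:11.1.10} really sits in $H(28,3)$ rather than some other $H(N,3)$. This is pinned down by matching the degree $56=2\cdot 28$ of the common coset graph, which is exactly the covering $H(7(9-1)/(3-1),3)=H(28,3)\to H(7,9)$ recorded in the introduction.
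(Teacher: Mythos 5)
Your proof is correct, but it takes a genuinely different route from the paper's, and in fact it is precisely the reduction that the paper chose \emph{not} to use here. You pass from the putative $\FF_9$-linear code $C$ in $H(7,9)$ to its coset graph, observe that this graph is a Cayley graph on the elementary abelian $3$-group $\FF_9^7/C$, apply Lemma~\ref{l:11.1.10} to re-realize it as the coset graph of an $\FF_3$-linear completely regular code, pin down the length as $28$ from the degree $b_0=56=2\cdot 28$, and then invoke Corollary~\ref{c:3}. This is exactly the $H(n,q)\to H(n(q-1)/(p-1),p)$ reduction recorded in the parenthetical note of Section~\ref{s:par}, and it is what the paper alludes to when it calls Corollary~\ref{c:9} a ``straightforward corollary of the fact above.'' The paper's own proof is different and shorter: if such a code existed in $H(7,9)$, its dual would be a $9$-ary code of length $7$, dimension $3$, with nonzero weights $4$, $6$, $7$ (dual weight distribution $\{0^1,4^{56},6^{392},7^{280}\}$), which is impossible by Proposition~\ref{p:7}, since $q=9$ is an odd prime power larger than $4$. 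The trade-off is not cosmetic: Corollary~\ref{c:3} rests on Proposition~\ref{p:3}, for which only a computer verification is known, so your route inherits that dependence on software, whereas the paper's dualization argument is entirely computer-free --- which is the stated reason this corollary is given its own proof at all. One small point of care in your write-up: minimum distance $\ge 3$ follows from $c_1=1$ \emph{together with} $b_0=56$ being the full degree of $H(7,9)$ (the latter rules out codewords at distance $1$, the former codewords at distance $2$); $c_1=1$ alone is not quite sufficient as stated.
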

\begin{proof}{}
 If such a code exists, then its dual weight distribution
 is $\{0^1,$ $4^{56},$ $6^{392},$ $7^{280}\},$
 which is impossible by
 Proposition~\ref{p:7}.
\end{proof}

\section[\{140,126,20,1;1,10,126,140\}, q=3]{\{140,126,20,1;1,10,126,140\}, \boldmath $q=3$} \label{s:140}

The only putative distance-regular graph in the [BCN] table
with prime-power order and diameter
more than $3$ has order $2187=3^7$
and
intersection array
$\{140,126,20,1;1,10,126,140\}$.

The corresponding completely regular code $C$ in $H(70,3)$
has intersection matrix
$$\left(
\begin{array}{ccccc}
  0 & 140 &   0 &   0 &  0 \\
  1 &  13 & 126 &   0 &  0 \\
  0 &  10 & 110 &  20 &  0 \\
  0 &   0 & 126 &  13 &  1 \\
  0 &   0 &   0 & 140 &  0
\end{array}\right).
$$
It is easy to see that $C \cup C^{(4)}$ is a completely regular code with intersection matrix
$$\left(
\begin{array}{ccccc}
  0 & 140 &  0  \\
  1 & 13  & 126 \\
  0 & 30 & 110
\end{array}\right)
.$$
Since $C^{(4)}$ consists of $3$ cosets of $C$, the code $C \cup C^{(4)}$ is linear;
its coset graph is a strongly regular graph
whose parameters are also questionable,
according to~\cite{Brouwer:param}.


The dual code is a ternary two-weight code
with weight distribution  $\{ 0^{1}, 45^{588}, 54^{140} \}$.
However, a $[70,6,45]_3$ code does not exist \cite{Maruta:2004}.

\begin{corollary}\label{c:3-70}
 There are no linear completely regular codes with intersection array
 $\{140,126,20,1;1,10,126,140\}$ or $\{140,126;1,30\}$ in $H(70,3)$.
\end{corollary}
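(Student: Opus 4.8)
The plan is to prove the two nonexistence statements in sequence, reducing the diameter-$4$ array to the diameter-$2$ array $\{140,126;1,30\}$ and then eliminating the latter with the coding-theoretic fact already quoted. The two are linked because the diameter-$4$ array is palindromic ($b_0=c_4$, $b_1=c_3$, $b_3=c_1$), so its coset graph is an antipodal cover whose antipodal quotient realizes exactly the strongly regular array $\{140,126;1,30\}$. I would therefore settle $\{140,126;1,30\}$ first and deduce the diameter-$4$ case from it.

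For $\{140,126;1,30\}$: a linear completely regular code $D$ with this array in $H(70,3)$ has, by Lemma~\ref{l:cr-dr}, a strongly regular coset graph. Counting distance classes gives $1+140+140\cdot 126/30=729=3^6$ vertices, so $D$ has codimension $6$ and its dual $D^\perp$ is a $[70,6]_3$ code. Since $D$ has minimum distance at least $3$ (the $140$ weight-one words of $\FF_3^{70}$ lie in $140$ distinct nonzero cosets, as $b_0=140$ forces), the graph is the coset graph of a projective code, and the standard correspondence between strongly regular coset graphs and projective two-weight codes fixes the nonzero weights of $D^\perp$: the graph has parameters $(729,140,13,30)$ with restricted eigenvalues $5$ and $-22$, so the two weights are $(140-5)/3=45$ and $(140+22)/3=54$, with frequencies $588$ and $140$ (the distribution $\{0^1,45^{588},54^{140}\}$ recorded above). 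In particular $D^\perp$ is a $[70,6,45]_3$ code. This is where I would invoke the nonexistence result: no $[70,6,45]_3$ code exists \cite{Maruta:2004}, a contradiction.

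For $\{140,126,20,1;1,10,126,140\}$: suppose a linear completely regular code $C$ exists in $H(70,3)$. Its coset graph is distance-regular of diameter $4$, and the palindromic array shows it is an antipodal cover whose antipodal class has size $1+k_4=3$. Viewing the coset graph as a Cayley graph on the quotient group $\FF_3^{7}$, the antipodal class of the zero vertex is closed under addition, hence a subgroup; equivalently $D:=C\cup C^{(4)}$ is a \emph{linear} code consisting of three cosets of $C$. Its distance partition is the merge $(C^{(0)}\cup C^{(4)},\,C^{(1)}\cup C^{(3)},\,C^{(2)})$, which yields the folded intersection matrix displayed above, i.e.\ the array $\{140,126;1,30\}$. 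Thus $D$ is a linear completely regular code with the diameter-$2$ array, contradicting the previous paragraph.

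The main obstacle is genuinely the last link: the nonexistence of a $[70,6,45]_3$ code. This is a hard, essentially computational coding-theoretic statement, and I would import it from Maruta's determination of the optimal ternary codes rather than attempt a self-contained argument. A secondary point that needs care, though it is not deep, is justifying that the antipodal class of $\bar 0$ is additively closed: this is the step where the linear (additive) structure of $C$ is actually used, and it is what upgrades the merge of distance classes into an honest linear code $D$ whose dual we can analyze as a two-weight code.
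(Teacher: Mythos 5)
Your proposal is correct and follows essentially the same route as the paper: merge $C$ with $C^{(4)}$ into a linear completely regular code with array $\{140,126;1,30\}$, identify its dual as a ternary two-weight $[70,6,45]_3$ code with weight distribution $\{0^1,45^{588},54^{140}\}$, and invoke Maruta's nonexistence result. The only differences are cosmetic (you settle the diameter-$2$ array first and you spell out the eigenvalue/weight computation and the linearity of $C\cup C^{(4)}$, which the paper states more tersely).
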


\section{Conclusion}\label{s:concl}
We have shown that
no distance-regular graphs
with an unsolved intersection array from
the table~\cite[Ch.\,14]{Brouwer}
can be constructed as a Cayley graph
on an elementary abelian group,
or, equivalently, as the coset graph of a linear completely regular code.

Finally, we observe that the existence of unrestricted (not necessarily linear) completely regular codes with the  parameters considered in Sections~\ref{s:42}--\ref{s:140}
and the existence of few-distance codes with dual parameters remain unsolved.

\subsection*{Acknowledgments}
The authors thank Sasha Kurz for helpful discussions and computer help.


\begin{thebibliography}{10}

\bibitem{BZZ:1974:UPC}
L.~A. Bassalygo, G.~V. Zaitsev, and V.~A. Zinoviev.
\newblock On uniformly packed codes.
\newblock {\em \href{http://link.springer.com/journal/11122}{Probl. Inf.
  Transm.}}, 10(1):6--9, 1974.
\newblock Translated from \em Probl. Peredachi Inf.\em, 10(1): 9-14, 1974.

\bibitem{QextNewEdition}
I.~Bouyukliev and S.~Bouyuklieva.
\newblock {\texttt{QextNewEdition}}.
\newblock
  \url{http://www.moi.math.bas.bg/moiuser/~data/Software/QextNewEdition}.

\bibitem{Brouwer:param}
A.~E. Brouwer.
\newblock Parameters of strongly regular graphs.
\newblock \url{https://www.win.tue.nl/~aeb/graphs/srg/srgtab.html}.

\bibitem{Brouwer}
A.~E. Brouwer, A.~M. Cohen, and A.~Neumaier.
\newblock {\em Distance-Regular Graphs}.
\newblock Springer-Verlag, Berlin, 1989.
\newblock \DOI{10.1007/978-3-642-74341-2}.

\bibitem{Delsarte:1973}
P.~Delsarte.
\newblock {\em An Algebraic Approach to Association Schemes of Coding Theory},
  volume~10 of {\em Philips Res. Rep., Supplement}.
\newblock N.V.~Philips' Gloeilampenfabrieken, Eindhoven, 1973.

\bibitem{Kro:struct}
D.~S. Krotov.
\newblock On weight distributions of perfect colorings and completely regular
  codes.
\newblock {\em \href{http://link.springer.com/journal/10623}{Des. Codes
  Cryptography}}, 61(3):315--329, 2011.
\newblock \DOI{10.1007/s10623-010-9479-4}.

\bibitem{Kurz:LINCODE}
S.~Kurz.
\newblock {\texttt{LINCODE}} --- computer classification of linear codes.
\newblock E-print 1912.09357, arXiv.org, 2019.

\bibitem{Martin:PhD}
W.~J. Martin.
\newblock {\em Completely Regular Subsets}.
\newblock {PhD} thesis, University of Waterloo, Waterloo, Ontario, Canada,
  1992.
\newblock Online: \href{http://users.wpi.edu/~martin/RESEARCH/THESIS/}%
  {http://users.wpi.edu/{\char126}martin/RESEARCH/THESIS/}.

\bibitem{Maruta:2004}
T.~Maruta.
\newblock The nonexistence of some ternary linear codes of dimension $6$.
\newblock {\em
  \href{http://www.sciencedirect.com/science/journal/0012365X}{Discrete
  Math.}}, 288(1-3):125--133, Nov. 2004.
\newblock \DOI{10.1016/j.disc.2004.07.003}.

\bibitem{Neumaier92}
A.~Neumaier.
\newblock Completely regular codes.
\newblock {\em
  \href{http://www.sciencedirect.com/science/journal/0012365X}{Discrete
  Math.}}, 106-107:353--360, Sept. 1992.
\newblock \DOI{10.1016/0012-365X(92)90565-W}.

\bibitem{SKS:drg}
M.~Shi, D.~S. Krotov, and P.~Sol\'e.
\newblock A new distance-regular graph of diameter $3$ on $1024$ vertices.
\newblock {\em \href{http://link.springer.com/journal/10623}{Des. Codes
  Cryptography}}, 2019.
\newblock \DOI{10.1007/s10623-019-00609-w}.

\bibitem{ShiKroSol:Kasami}
M.~Shi, D.~S. Krotov, and P.~Sol\'e.
\newblock A new approach to the {K}asami codes of type 2.
\newblock {\em
  \href{http://ieeexplore.ieee.org/xpl/RecentIssue.jsp?punumber=18}{IEEE Trans.
  Inf. Theory}}, 66(4):2456--2465, Apr. 2020.
\newblock \DOI{10.1109/TIT.2019.2949609}.

\bibitem{SumVor:2016}
S.~Sumalroj and C.~Worawannotai.
\newblock The nonexistence of a distance-regular graph with intersection array
  $\{22,16,5;1,2,20\}$.
\newblock {\em \href{http://www.combinatorics.org}{Electr. J. Comb.}},
  24(1):\#P1.32(1--10), 2016.
\newblock \DOI{10.37236/5486}.

\bibitem{Urlep:2012}
M.~Urlep.
\newblock Triple intersection numbers of {$Q$}-polynomial distance-regular
  graphs.
\newblock {\em
  \href{http://www.sciencedirect.com/science/journal/01956698}{Eur. J. Comb.}},
  33(6):1246--1252, Aug. 2012.
\newblock \DOI{10.1016/j.ejc.2012.02.005}.

\bibitem{vDKT:DR}
E.~R. van Dam, J.~H. Koolen, and H.~Tanaka.
\newblock Distance-regular graphs.
\newblock {\em \href{http://www.combinatorics.org}{Electr. J. Comb.}}, Dynamic
  Surveys(\#DS22), Apr. 2016.

\end{thebibliography}

\providecommand\href[2]{#2} \providecommand\url[1]{\href{#1}{#1}}
  \def\DOI#1{{\small {DOI}:
  \href{http://dx.doi.org/#1}{#1}}}\def\DOIURL#1#2{{\small{DOI}:
  \href{http://dx.doi.org/#2}{#1}}}

\end{document}